\theoremstyle{plain}
\theoremstyle{plain}
\newtheorem{theorem}{Theorem} [section]
\newtheorem{lemma}[theorem]{Lemma}
\theoremstyle{definition}
\newtheorem{remark}[theorem]{Remark}
\numberwithin{theorem}{section}
\numberwithin{equation}{section}
\numberwithin{figure}{section}
\def\mean#1{\mathchoice
         {\mathop{\kern 0.2em\vrule width 0.6em height 0.69678ex depth -0.58065ex
                 \kern -0.8em \intop}\nolimits_{\kern -0.4em#1}}%
         {\mathop{\kern 0.1em\vrule width 0.5em height 0.69678ex depth -0.60387ex
                 \kern -0.6em \intop}\nolimits_{#1}}%
         {\mathop{\kern 0.1em\vrule width 0.5em height 0.69678ex
             depth -0.60387ex
                 \kern -0.6em \intop}\nolimits_{#1}}%
         {\mathop{\kern 0.1em\vrule width 0.5em height 0.69678ex depth -0.60387ex
                 \kern -0.6em \intop}\nolimits_{#1}}}
\def\N{\mathbb N}
\def\R{\mathbb R}
\def\e{\varepsilon}
\def\n{\nabla}
\DeclareMathOperator{\diam}{diam}
\DeclareMathOperator{\Id}{Id}
\title[]{Rigidity and stability of  Caffarelli's log-concave perturbation theorem}
\author[G. De Philippis]{Guido De Philippis}
\address{SISSA, Via Bonomea 265, 34136 Trieste, Italy.}
\email{guido.dephilippis@sissa.it}
\author[A. Figalli]{Alessio Figalli}
\address{Department of Mathematics,
The University of Texas at Austin, 2515 Speedway, RLM 8.100 Stop C1200,
Austin TX 78712, USA.}
\email{figalli@math.utexas.edu}
\keywords{}
\begin{document}

\begin{abstract}
In this note we establish some rigidity and stability results for  Caffarelli's log-concave perturbation theorem. As an application we show that if  a \(1\)-log-concave measure has almost the same Poincar\'e constant as the Gaussian measure, then it almost splits off a Gaussian factor.
\end{abstract}

 \dedicatory{To Nicola Fusco, for his 60th birthday, con affetto e ammirazione.}

\maketitle

\section{Introduction}

Let $\gamma_n$ denote the centered Gaussian measure in \(\R^n\), i.e. \(\gamma_n=(2\pi)^{-n/2}e^{-|x|^2/2}dx\), and let $\mu$ be a probability measure
on $\R^n$.
By a  classical theorem of Brenier \cite{Br}, there exists a convex function $\varphi:\R^n\to \R$ such that $T=\nabla\varphi:\R^n\to\R^n$ transports $\gamma_n$ onto $\mu$, i.e. \(T_\sharp \gamma_n=\mu\), or equivalently
\begin{equation*}
\int h\circ T \,d\gamma_n=\int h\, d\mu\qquad \textrm{for all continuous and bounded functions \(h\in C_b(\R^n)\)}.
\end{equation*}
In the sequel we will refer to \(T\) as the {\em Brenier map} from $\gamma_n$ to $\mu$.

In \cite{Caf1,Caf2} Caffarelli proved that if $\mu$ is ``more log-concave'' than $\gamma_n$, then $T$ is $1$-Lipschitz, that is, all the eigenvalues 
of $D^2\varphi$ are bounded from above by $1$. Here is the exact statement:

\begin{theorem}[Caffarelli]\label{thm:caffarelli} Let \(\gamma_n\) be the Gaussian measure in \(\R^n\), and let \(\mu=e^{-V} dx\) be a probability measure satisfying \(D^2 V\ge \Id_n\). Consider the Brenier  map \(T=\nabla \varphi\) from \(\gamma_n\) to \(\mu\). Then \(T\)  is \(\)1-Lipschitz, i.e. \(D^2\varphi(x)\le\Id \)
for a.e. $x$.
\end{theorem}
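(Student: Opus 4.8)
\section*{Proof proposal}

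The plan is to extract the partial differential equation satisfied by \(\varphi\) and to run a maximum principle on the second derivatives of \(\varphi\). Since \(T=\nabla\varphi\) pushes \(\gamma_n\) forward to \(\mu=e^{-V}dx\), the change of variables formula gives, at every point of twice differentiability of \(\varphi\),
\begin{equation*}
\det D^2\varphi(x)=(2\pi)^{-n/2}\,e^{\,V(\nabla\varphi(x))-|x|^2/2},
\end{equation*}
hence, taking logarithms,
\begin{equation*}
\log\det D^2\varphi(x)=V(\nabla\varphi(x))-\frac{|x|^2}{2}-\frac n2\log(2\pi).
\end{equation*}
I would first carry out the whole argument assuming \(\varphi\) smooth and uniformly convex, and remove this hypothesis at the end by approximation.

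Fix a unit vector \(e\), write \(u=\varphi\), \(u_{ij}=\partial_{ij}u\), \((u^{ij})=(u_{ij})^{-1}\), and adopt the summation convention. Differentiating the log-Monge--Amp\`ere equation once in the direction \(e\) gives \(u^{ij}u_{ije}=\langle\nabla V(\nabla u),D^2u\,e\rangle-\langle x,e\rangle\); differentiating once more in the direction \(e\), using \(\partial_e u^{ij}=-u^{ik}u^{lj}u_{kle}\) and \(D^2u\,e=\nabla(u_e)\), and rearranging, one obtains
\begin{equation*}
u^{ij}\partial_{ij}(u_{ee})-\langle\nabla V(\nabla u),\nabla(u_{ee})\rangle
= u^{ik}u^{lj}u_{kle}u_{ije}+D^2V(\nabla u)[D^2u\,e,\,D^2u\,e]-1 .
\end{equation*}
Now \(u^{ik}u^{lj}u_{kle}u_{ije}=\operatorname{tr}\bigl((A^{-1/2}BA^{-1/2})^2\bigr)\ge0\), where \(A=D^2u>0\) and \(B=\partial_e D^2u\) is symmetric, while \(D^2V\ge\Id\) forces \(D^2V(\nabla u)[D^2u\,e,D^2u\,e]\ge|D^2u\,e|^2\). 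Writing \(\mathcal{L}\psi:=u^{ij}\partial_{ij}\psi-\langle\nabla V(\nabla u),\nabla\psi\rangle\) for the (possibly degenerate) linearized operator, this yields the key differential inequality
\begin{equation*}
\mathcal{L}\bigl(u_{ee}\bigr)\ \ge\ |D^2u\,e|^2-1 .
\end{equation*}

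Next comes the maximum principle. Let \(\Lambda:=\sup_{x}\lambda_{\max}(D^2u(x))=\sup_{x,\,|e|=1}u_{ee}(x)\), and suppose this supremum is attained at some \(x_0\) and unit vector \(e_0\); then \(e_0\) is a top eigenvector of \(D^2u(x_0)\), so \(D^2u(x_0)e_0=\Lambda e_0\). The scalar function \(x\mapsto u_{e_0e_0}(x)\) has a global maximum at \(x_0\), hence \(\nabla(u_{e_0e_0})(x_0)=0\) and \(D^2(u_{e_0e_0})(x_0)\le0\), so \(\mathcal{L}(u_{e_0e_0})(x_0)\le0\). Combined with the inequality above, \(\Lambda^2-1=|D^2u(x_0)e_0|^2-1\le0\), i.e.\ \(\Lambda\le1\), which is exactly \(D^2\varphi\le\Id\).

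It remains to remove the smoothness and the attainment-of-the-maximum hypotheses, and this is where the real work lies. Regularity is handled by Caffarelli's interior regularity theory for Monge--Amp\`ere equations with a positive smooth density. The delicate point is the non-compactness of \(\R^n\): a priori the supremum \(\Lambda\) need not be attained. I would deal with this by a truncation: writing \(V=\tfrac12|x|^2+W\) with \(W\) convex (by \(D^2V\ge\Id\)), replace \(W\) outside a ball \(B_R\) by a smooth convex function that is affine at infinity, obtaining \(V_R\) with \(D^2V_R\ge\Id\), \(V_R=V\) on \(B_R\), and a probability measure \(\mu_R\propto e^{-V_R}\) that coincides with \(\gamma_n\) (up to normalization) outside a compact set; for such \(\mu_R\) one expects the Brenier map to satisfy \(D^2\varphi_R(x)\to\Id\) as \(|x|\to\infty\), so that \(\Lambda\) is either trivially \(\le1\) or attained at a finite point and Steps above apply. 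Letting \(R\to\infty\) and using stability of Brenier maps gives \(\varphi_R\to\varphi\) and hence \(D^2\varphi\le\Id\) a.e. The main obstacle is precisely this last analysis at infinity — controlling the asymptotic behaviour of the Brenier map so that the global maximum of the largest eigenvalue of \(D^2\varphi\) is genuinely accessible to the maximum principle; the differential computation itself is essentially forced.
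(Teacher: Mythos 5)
The paper does not prove this theorem; it is Caffarelli's classical result, cited from \cite{Caf1,Caf2} and used as a black box. So the comparison is with Caffarelli's own proof, and your proposal is essentially that proof: differentiating the log--Monge--Amp\`ere equation
\[
\log\det D^2\varphi = V(\nabla\varphi)-\tfrac{|x|^2}{2}-\tfrac n2\log(2\pi)
\]
twice in a fixed direction \(e\), dropping the nonnegative term \(\operatorname{tr}\bigl((A^{-1/2}BA^{-1/2})^2\bigr)\) and the \(D^2V\ge\Id\) term, to get the differential inequality \(\mathcal L(u_{ee})\ge |D^2u\,e|^2-1\), and then invoking the maximum principle at an interior maximum of \(u_{ee}\) along a top eigenvector. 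Your formal computation is correct, and you also correctly identify the genuine difficulty: the supremum of \(u_{ee}\) over \(x\in\R^n\) and \(|e|=1\) need not be attained, so the maximum principle cannot be applied directly.

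Where the proposal falls short is precisely at that acknowledged difficulty. The truncation sketch is not a proof, and as stated it is not clearly workable. Even if you replace \(W\) outside a ball by an affine function so that \(\mu_R\) is a shifted Gaussian (up to normalization) outside a compact set, it does \emph{not} follow that \(D^2\varphi_R(x)\to\Id\) as \(|x|\to\infty\): the Brenier map is a global object, and modifying the target density on a compact set alters the map everywhere, including at infinity (already in one dimension the optimal map between \(\gamma_1\) and a measure that equals \(\gamma_1\) outside \([-R,R]\) but differs inside need not have derivative converging to \(1\) at infinity). This non-attainment issue is exactly what necessitated Caffarelli's erratum \cite{Caf2}; the actual fix there is a more delicate penalization/approximation argument, not the density truncation you outline. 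So: correct approach, correct computation, honestly flagged gap, but the proposed way of closing the gap does not go through as written and is the part that would require real work to make rigorous.
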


This theorem allows one to show that optimal constants in several functional inequalities are extremized by the Gaussian measure. More precisely, let \(F,G,H,L,J\) be continuous  functions on \(\R\)  and assume that \(F,G,H,J\)  are nonnegative, and that \(H\) and \(J\) are {increasing}. For \(\ell\in \R_+\) let 
\begin{equation}\label{eq:lambda}
\lambda(\mu,\ell):=\inf\Bigg\{\frac{H\Big(\int J(|\nabla u|) \,d\mu\Big)} {F\Big(\int G(u)\, d\mu\Big)}\,:\qquad u\in {\rm Lip}(\R^n)\,, \int L(u)\,d\mu=\ell\Bigg\}.
\end{equation}
Then
 \begin{equation}\label{eq:lambda2}
 \lambda(\gamma_n,\ell)\le \lambda(\mu,\ell).
 \end{equation}
Indeed, given a function \(u\) admissible in the variational formulation for \(\mu\), we set \(v:=u\circ T\)  and note that, since \(T_\sharp \gamma_n=\mu\),
$$
\int K(v)\,d\gamma_n=\int K(u\circ T)\,d\gamma_n=\int K(u)\,d\mu\qquad \text{for $K=G,L$.}
$$
In particular, this implies that $v$ is admissible in the variational formulation for \(\gamma_n\). 
Also, thanks to Caffarelli's Theorem, 
\[
|\nabla v|\le |\nabla u|\circ T\,|\nabla T|\le  |\nabla u|\circ T,
\]
therefore
$$
H\Big(\int J(|\nabla v|)\, d\gamma_n\Big)
\leq H\Big(\int J(|\nabla u|)\circ T\, d\gamma_n\Big)=
H\Big(\int J(|\nabla u|)\, d\mu\Big).
$$
Thanks to these formulas, \eqref{eq:lambda2} follows easily.

Note that the classical Poincar\'e and Log-Sobolev inequalities fall in the above general framework. \\

Two  questions that naturally arise from the above considerations are:
\begin{itemize}
\item[-]\emph{Rigidity}: What can be said of \(\mu\) when \(\lambda (\mu,\ell)=\lambda(\gamma_n,\ell)\)?
\item[-]\emph{Stability}: What can be said of \(\mu\) when \(\lambda (\mu,\ell)\approx \lambda(\gamma_n,\ell)\)?
\end{itemize}
Looking at the above proof, 
these two questions can usually be reduced to the study of the corresponding ones concerning the optimal map \(T\) in Theorem \ref{thm:caffarelli} (here  $|A|$
denotes the operator norm of a matrix $A$):
\begin{itemize}
\item[-]\emph{Rigidity}: What can be said of \(\mu\) when \(|\nabla T(x)|=1\) for a.e. \(x\) ?
\item[-]\emph{Stability}: What can be said of \(\mu\) when  \(|\nabla T(x)|\approx 1\) (in suitable sense)?
\end{itemize}
Our first main result state that if \(|\nabla T(x)|=1\) for a.e. \(x\) then \(\mu\)  ``splits off'' a Gaussian factor.  More precisely, it splits off as many Gaussian factors as the number of eigenvalues  of \(\nabla T=D^2\varphi\) that are equal to \(1\).
In the following statement and in the sequel, given \(p \in \R^k\)  we denote by \(\gamma_{p,k}\) the Gaussian measure in $\R^k$ with barycenter \(p\), that is,  \(\gamma_{p,k}=(2\pi)^{-k/2}e^{-|x-p|^2/2}dx\).

\begin{theorem}[Rigidity]\label{thm:rigidity}Let \(\gamma_n\) be the Gaussian measure in \(\R^n\), and let \(\mu=e^{-V} dx\) be a probability measure with \(D^2 V\ge \Id_n\). Consider the Brenier map \(T=\nabla \varphi\) from \(\gamma_n\) to \(\mu\), and let 
\[
0\le \lambda_1(D^2 \varphi(x))\le \dots\le \lambda_n(D^2 \varphi(x))\leq 1
\]
 be the eigenvalues of the matrix $D^2\varphi(x)$. If \(\lambda_{n-k+1}(D^2 \varphi(x))=1\) for a.e. \(x\) then
  \(\mu= \gamma_{p,k}\otimes e^{-W(x')}d x'\), where  \(W:\R^{n-k}\to \R\) satisfies \(D^2W\ge \Id_{n-k}\).
\end{theorem}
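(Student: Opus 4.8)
The plan is to exploit the Monge--Amp\`ere equation satisfied by $\varphi$, together with the hypothesis $D^2V\ge\Id_n$, to produce a \emph{fixed} subspace $P\subseteq\R^n$ of dimension $\ge k$ on which $D^2\varphi(x)$ is the identity for \emph{every} $x$; once this is known, the splitting of $\mu$ follows from the tensorization of optimal transport for product measures. As a preliminary reduction, note that $D^2V\ge\Id_n$ makes $V$ locally Lipschitz, so $e^{-V}$ is locally bounded away from $0$ and $\infty$; by the regularity theory for the Monge--Amp\`ere equation $\varphi\in C^{1,\alpha}_{\mathrm{loc}}$, and after mollifying $V$ (which preserves $D^2V\ge\Id_n$) and using Schauder estimates one may assume $\varphi\in C^\infty_{\mathrm{loc}}$, recovering the statement in the limit. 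Moreover, from $(2\pi)^{-n/2}e^{-|x|^2/2}=e^{-V(\nabla\varphi)}\det D^2\varphi$ one sees that $\det D^2\varphi$ is locally bounded away from $0$ and $\infty$, which together with $0\le D^2\varphi\le\Id$ (Theorem~\ref{thm:caffarelli}) gives $c\,\Id\le D^2\varphi\le\Id$ on every ball; hence the linearized Monge--Amp\`ere operator $\mathcal M w:=\mathrm{tr}\big((D^2\varphi)^{-1}D^2w\big)$ is locally uniformly elliptic with locally bounded coefficients.

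Next I would fix a unit vector $e\in\R^n$ and differentiate $\log\det D^2\varphi=V(\nabla\varphi)-\tfrac12|x|^2-\tfrac n2\log(2\pi)$ twice in the direction $e$. Writing $A=D^2\varphi$ and using $\partial_{ee}A=D^2(\varphi_{ee})$, a direct computation gives
\begin{equation*}
\mathcal M(\varphi_{ee})-\big\langle\nabla V(\nabla\varphi),\nabla\varphi_{ee}\big\rangle
=\mathrm{tr}\big((A^{-1/2}(\partial_eA)A^{-1/2})^2\big)+\big\langle D^2V(\nabla\varphi)\,Ae,\,Ae\big\rangle-1
\ \ge\ \varphi_{ee}^2-1,
\end{equation*}
where the inequality uses $D^2V\ge\Id_n$ and $\varphi_{ee}=\langle Ae,e\rangle\le|Ae|$. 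Since $\varphi$ is convex and $D^2\varphi\le\Id$ we have $\varphi_{ee}\in[0,1]$, so $\varphi_{ee}^2-1=(\varphi_{ee}-1)(\varphi_{ee}+1)\ge 2(\varphi_{ee}-1)$, and therefore $g_e:=\varphi_{ee}-1\le 0$ satisfies $\mathcal L g_e\ge 0$, where $\mathcal L w:=\mathcal M w-\langle\nabla V(\nabla\varphi),\nabla w\rangle-2w$ is locally uniformly elliptic with zeroth-order coefficient $-2<0$. By the strong maximum principle, for each fixed unit $e$ either $\varphi_{ee}\equiv 1$ on $\R^n$ or $\varphi_{ee}<1$ everywhere; and in the first case $\langle(\Id-A(x))e,e\rangle\equiv 0$ with $\Id-A(x)\ge 0$ forces $D^2\varphi(x)\,e=e$ for every $x$.

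Let $P:=\{e\in\R^n:\ D^2\varphi(x)\,e=e\ \text{for all }x\}$; this is a linear subspace. I claim $\dim P\ge k$. Indeed, if $\dim P=j<k$, then by the dichotomy above $\varphi_{ee}<1$ on $\R^n$ for every unit $e\in P^\perp$; on the other hand, the hypothesis $\lambda_{n-k+1}(D^2\varphi(x))=1$ means that for a.e.\ $x$ the eigenspace $E(x)$ of $D^2\varphi(x)$ relative to $1$ has $\dim E(x)\ge k$, whence $\dim\big(E(x)\cap P^\perp\big)\ge k+(n-j)-n=k-j\ge 1$, producing a unit vector $e\in P^\perp$ with $\varphi_{ee}(x)=\langle A(x)e,e\rangle=1$ --- a contradiction. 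Choosing coordinates so that $P=\R^{k'}\times\{0\}$, $k'=\dim P\ge k$, and writing $x=(y,z)\in\R^{k'}\times\R^{n-k'}$, the relation $D^2\varphi\,e=e$ for all $e\in P$ says $\nabla(\partial_{y_i}\varphi)\equiv e_i$ for $i=1,\dots,k'$, hence $\varphi(y,z)=\tfrac12|y|^2+\langle p,y\rangle+\phi(z)$ for some constant $p\in\R^{k'}$ and some convex $\phi:\R^{n-k'}\to\R$. Thus $T=\nabla\varphi$ is the product map $(y,z)\mapsto(y+p,\nabla\phi(z))$, and since $\gamma_n=\gamma_{0,k'}\otimes\gamma_{0,n-k'}$ we get $\mu=T_\sharp\gamma_n=\gamma_{p,k'}\otimes\nu$ with $\nu=(\nabla\phi)_\sharp\gamma_{0,n-k'}$; comparing with $\mu=e^{-V}dx$ shows $\nu=e^{-\tilde V}dz$ with $D^2\tilde V\ge\Id_{n-k'}$, and if $k'>k$ one splits off a further Gaussian factor of dimension $k'-k$ to obtain $\mu=\gamma_{p,k}\otimes e^{-W(x')}dx'$ with $D^2W\ge\Id_{n-k}$.

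The step I expect to be most delicate is the regularity/approximation: the pointwise double differentiation of the Monge--Amp\`ere equation and the application of the strong maximum principle require $\varphi$ to be smooth enough, so one works with mollified data and must check that the product structure found for the regularized problems passes to the limit --- which is easy, since this structure is encoded by the vanishing of $\partial^2_{yz}\varphi$ and $\partial^2_{yy}\varphi-\Id_{k'}$, quantities that are stable under local $C^1$ convergence of the Brenier maps. The other ingredients --- the differential identity, the maximum-principle dichotomy, and the dimension count --- are short and essentially routine once set up.
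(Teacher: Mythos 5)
Your strategy---differentiate the Monge--Amp\`ere equation $\log\det D^2\varphi = V(\nabla\varphi)-|x|^2/2-\tfrac n2\log(2\pi)$ twice along a direction $e$, observe that $g_e:=\varphi_{ee}-1\le 0$ is a subsolution of a uniformly elliptic operator with strictly negative zeroth-order coefficient, and invoke the strong maximum principle to get a dichotomy for each fixed $e$---is a genuinely different route from the paper, which instead works with the contact set $\{\psi=0\}$ of $\psi:=|x|^2/2-\varphi$ and applies the Aleksandrov maximum principle to show that it must contain a line. Your PDE computation and the subsequent dimension count are correct as far as they go: the inequality $\mathcal M(\varphi_{ee})-\langle\nabla V(\nabla\varphi),\nabla\varphi_{ee}\rangle\ge \varphi_{ee}^2-1\ge 2(\varphi_{ee}-1)$ is right, the lower bound $D^2\varphi\ge c_R\Id$ on $B_R$ from $\det D^2\varphi\ge c_R$ and $D^2\varphi\le\Id$ is right, and the argument that $\dim P\ge k$ by intersecting $E(x)$ with $P^\perp$ is sound.

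However, the regularity/approximation step---which you yourself flag as the delicate one---has a genuine gap, and it is fatal to the scheme as stated. The computation requires fourth derivatives of $\varphi$ (and second derivatives of $V$), so you need $\varphi$ to be smooth before you can even write $\mathcal L g_e\ge 0$ pointwise. You propose to mollify $V\to V_\e$ (keeping $D^2V_\e\ge\Id_n$), run the argument for the smooth Brenier potential $\varphi_\e$, and pass to the limit. But the hypothesis of the theorem, $\lambda_{n-k+1}(D^2\varphi(x))=1$ for a.e.\ $x$, is an \emph{equality} imposed on the original $\varphi$; it is not inherited by $\varphi_\e$. There is no reason for $\lambda_{n-k+1}(D^2\varphi_\e)$ to equal $1$ anywhere, so there is no ``product structure found for the regularized problems'' to pass to the limit. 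Moreover, the functional $\int\lambda_{n-k+1}(D^2\cdot)\,d\gamma_n$ is neither convex nor concave in the Hessian when $1<k<n$, so you cannot even hope for a semicontinuity argument that lets you transfer near-rigidity from $\varphi_\e$ back to $\varphi$. Nor can you avoid mollification by working with $\varphi$ directly: from the data one only gets $\varphi\in C^{2,\alpha}_{\rm loc}$ (the right-hand side $e^{-|x|^2/2+V(\nabla\varphi)}$ is merely locally Lipschitz since $V$ is just convex, so Schauder bootstrap stops), and $C^{2,\alpha}$ is not enough to differentiate the equation classically, let alone twice. This is precisely why the paper's argument goes through the Aleksandrov estimate applied to $\psi-\ell_\eta$ on the section $\Sigma_\eta$: that tool operates at the level of bare convexity and uses only $\det D^2\psi=0$ a.e., which is exactly the information available from $C^{1,1}$ regularity and the hypothesis. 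To rescue your approach you would need either a viscosity/weak formulation of the second-order differentiated inequality together with a strong maximum principle in that class (in the spirit of Caffarelli's original proof of the contraction theorem, carried out with second difference quotients), or an argument showing that the a.e.\ rank condition on $\Id-D^2\varphi$ forces the conclusion without ever differentiating the equation; as written, neither is supplied.
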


Our second main result is a quantitative version of the above theorem. Before stating it let us recall that, given two probability measures \(\mu, \nu\in \mathcal P(\R^n)\), the  \(1\)-Wasserstein distance between them is defined as
\[
W_1(\mu,\nu):=\inf\Big\{\int |x-y|\,d\sigma(x,y)\,:\quad \sigma\in \mathcal P(\R^n\times \R^n)\textrm{ such that } ({\rm pr}_1)_\sharp\sigma=\mu,\, ({\rm pr}_2)_\sharp\sigma=\nu\Big\},
\]
where \({\rm pr}_1\) (resp.  \({\rm pr}_2\)) is the projection of \(\R^n\times\R^n\)  onto the first (resp. second) factor.

%
%
%
%

\begin{theorem}[Stability]\label{thm:stability}Let \(\gamma_n\) be the gaussian measure in \(\R^n\)and let \(\mu=e^{-V} dx\) be a probability measure with \(D^2 V\ge \Id_n\). Consider the Brenier map \(T=\nabla \varphi\) from \(\gamma_n\) to \(\mu\), and let 
\[
0\le \lambda_1(D^2 \varphi(x))\le \dots\le \lambda_n(D^2 \varphi(x))\leq 1
\] 
be the eigenvalues of $D^2\varphi(x)$. Let \(\e\in (0,1)\) and assume that 
\begin{equation}\label{eq:almost}
1-\e\le \int \lambda_{n-k+1}(D^2 \varphi(x))\,d\gamma_n(x)\le 1\,.
\end{equation}
Then there exists a probability measure \(\nu= \gamma_{p,k}\otimes e^{-W(x')}d x'\), with  \(W:\R^{n-k}\to \R\) satisying  \(D^2W\ge \Id_{n-k}\), such that
\begin{equation}\label{eq:log}
W_1(\mu,\nu) \lesssim \frac{1}{|\log\e|^{1/4_-}}.
\end{equation}
\end{theorem}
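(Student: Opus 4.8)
The plan is to deduce the stability statement from a quantitative version of the rigidity argument, using compactness-and-quantitative-estimates on the Brenier map. The starting point is the observation, underlying Theorem~\ref{thm:rigidity}, that $\lambda_{n-k+1}(D^2\varphi)=1$ a.e.\ forces the top $k$ eigenvalues to be identically $1$, and that in the rigid case the map $T$ acts as the identity (up to a translation) on a fixed $k$-dimensional subspace while $\mu$ splits accordingly. Under the almost-equality hypothesis \eqref{eq:almost}, I would first show that $\lambda_{n-k+1}\approx 1$ in an integrated sense forces \emph{all} of $\lambda_{n-k+1},\dots,\lambda_n$ to be close to $1$ in $L^1(\gamma_n)$: indeed $\lambda_{n-k+1}\le\cdots\le\lambda_n\le 1$, so $\int(1-\lambda_{n-i+1})\,d\gamma_n\le\e$ for $i=1,\dots,k$. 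Hence $\int \operatorname{tr}\bigl(\Id_n - D^2\varphi\bigr)$ restricted to the top eigenvalues is $O(\e)$.

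**Extracting an almost-invariant subspace.** The next step is to produce a $k$-dimensional subspace on which $D^2\varphi$ is close to the identity \emph{in a way that is stable in $x$}. The difficulty is that the eigenspaces of $D^2\varphi(x)$ vary with $x$. I would integrate: set $M:=\int D^2\varphi(x)\,d\gamma_n(x) = \int \nabla T\,d\gamma_n$, a symmetric matrix with $0\le M\le \Id_n$. From the eigenvalue bound, the sum of the top $k$ eigenvalues of $M$ is at least $k-k\e$ (by concavity of $\lambda\mapsto$ (sum of top $k$ eigenvalues) and Jensen, or directly by testing against the coordinate directions that realize the top eigenvalues of $M$ and using $\langle D^2\varphi\, v,v\rangle\le 1$). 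Let $\Pi$ be the orthogonal projection onto the span $E$ of the top $k$ eigenvectors of $M$; then $\int \langle (\Id_k - \Pi D^2\varphi\,\Pi)v,v\rangle\,d\gamma_n \lesssim \e$ for $v\in E$, and combined with $D^2\varphi\le\Id$ and a Cauchy--Schwarz / spectral argument one gets $\int \|\Pi\,D^2\varphi\,\Pi - \Pi\|\,d\gamma_n \lesssim \e$ and, crucially, $\int\|(\Id_n-\Pi)D^2\varphi\,\Pi\|^2\,d\gamma_n\lesssim\e$ (off-diagonal blocks are small since the eigenvalue along $E$ is near the maximal value $1$). Writing coordinates so that $E=\R^k\times\{0\}$, this says $\partial_{x'}(\Pi\circ T)$ and $\partial_{x''}(\Pi\circ T)-\Id_k$ are small in $L^2(\gamma_n)$, where $x=(x',x'')\in\R^k\times\R^{n-k}$.

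**From small derivative to a splitting of $\mu$.** With the almost-invariant subspace in hand, I would show the first $k$ coordinates of $T$ are close to $x'\mapsto x'+p$ for a constant $p$ (the gradient of the ``$x'$-part'' of $\varphi$ is close to being independent of $x''$ and close to identity in $x'$, hence Poincaré on $\gamma_n$ pins it down up to a constant vector $p$), and that the remaining coordinates of $T$ depend little on $x'$. Pushing forward $\gamma_n=\gamma_k\otimes\gamma_{n-k}$ through this nearly-split map yields $\mu$ close in $W_1$ to $\gamma_{p,k}\otimes \rho$ where $\rho$ is the law of the $x''$-part; one then checks $\rho$ is itself a Brenier image of $\gamma_{n-k}$ under a $1$-Lipschitz gradient map, so $\rho=e^{-W}dx'$ with $D^2W\ge\Id_{n-k}$, giving the target $\nu$. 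Quantifying the $W_1$ bound: $W_1(\mu,\nu)\le \int |T - T_{\rm split}|\,d\gamma_n$, and $|T-T_{\rm split}|$ is controlled by the $L^1$-norm of the relevant blocks of $D^2\varphi - (\text{block form})$, which is $O(\e^{1/2})$ --- \emph{except} that converting the $L^2$-smallness of second derivatives into $L^1$-smallness of $\nabla\varphi$ in the $x''$-directions on the noncompact space $\R^n$ is where the loss occurs.

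**The main obstacle.** The hard part is exactly this last conversion and it is the reason for the logarithmic rate in \eqref{eq:log}. The bounds above are of the form ``$\int(\text{stuff}) \le \e$'' but $\|T\|$ has no a priori global bound; to get $W_1$ control one must truncate to a large ball $B_R$, estimate the contribution outside $B_R$ using Gaussian concentration (which costs $e^{-cR^2}$ plus a moment bound on $\mu$, and $\int|x|\,d\mu\le\int|x|\,d\gamma_n + \int|T(x)-x|d\gamma_n$ can itself be bounded since $\mu$ is $1$-log-concave), and inside $B_R$ convert $L^2$ Hessian smallness into a pointwise/$L^1$ displacement bound via a Poincaré-type inequality on $B_R$, which introduces a polynomial factor in $R$. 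Optimizing $R\sim|\log\e|^{1/2}$ against the exponential tail and the polynomial interior loss produces $W_1(\mu,\nu)\lesssim |\log\e|^{\text{const}}\cdot\e^{1/2}$ or, after the cruder but more robust argument that only uses an $L^1$ (not $L^2$) modulus, the stated $|\log\e|^{-1/4_-}$; I would present the robust version, deriving \eqref{eq:log} by choosing the truncation radius and a mollification scale to balance the three error sources. A secondary technical point is justifying that the integrated matrix $M$ genuinely ``sees'' a $k$-dimensional direction uniformly --- this is clean when $k=1$ and requires the concavity-of-top-$k$-eigenvalues remark for general $k$; I would isolate that as a short linear-algebra lemma before the main estimate.
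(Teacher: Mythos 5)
The approach you sketch is genuinely different from the paper's, and it has a gap at its central step that I don't see how to fill.

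\textbf{The gap: extracting the almost-invariant subspace.} You propose to set $M:=\int D^2\varphi\,d\gamma_n$ and use its top $k$-dimensional eigenspace as the ``good'' subspace, justifying that the sum of the top $k$ eigenvalues of $M$ is at least $k-k\e$ by ``concavity of $\lambda\mapsto$ (sum of top $k$ eigenvalues) and Jensen.'' But the map $A\mapsto\sum_{i=n-k+1}^n\lambda_i(A)$ is the supremum over $k$-dimensional subspaces $E$ of $\mathrm{tr}(\Pi_E A)$, hence it is \emph{convex}, not concave; Jensen therefore gives $\sum_{\text{top }k}\lambda_i(M)\le\int\sum_{\text{top }k}\lambda_i(D^2\varphi)\,d\gamma_n$ --- the wrong direction. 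There is no elementary reason why $\lambda_{n-k+1}(M)$ should be close to $1$: schematically, if the top eigenvector of $D^2\varphi(x)$ rotates with $x$, averaging destroys the concentration (in $2$D, if $D^2\varphi(x)=v(x)\otimes v(x)$ with $v$ uniformly distributed, then $M=\tfrac12\Id_2$). The fact that the eigenvector \emph{cannot} rotate much is exactly what rigidity/stability asserts, so assuming it at this step is circular. Without the almost-invariant subspace, the Poincar\'e/$L^2$-block argument that follows has nothing to hang on.

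\textbf{How the paper gets around this, and why the rate is logarithmic.} The paper does \emph{not} average the Hessian. Instead it works with $\psi=|x|^2/2-\varphi$ and $\int\lambda_k(D^2\psi)\,d\gamma_n\le\e$, and uses the structure of convex functions: take a normalized section $S_1=\{\psi<p\cdot x+1\}$ with barycenter at $0$, apply the Alexandrov (ABP) estimate to deduce $1\lesssim R^n\int_{S_1}\det D^2\psi\lesssim R^n e^{R^2/2}\e$, hence $R\gtrsim|\log\e|^{1/2_+}$; then by the John ellipsoid lemma there is a line segment of length $\sim R$ through the origin inside $S_1$, and on a ball $B_\rho$ with $\rho\ll R^{1/2}$ one gets $|\partial_1\psi|\lesssim\rho^2/R$ by convexity, then $|\nabla\psi-\nabla\psi_1|\lesssim\rho^{3/2}/R^{1/2}$ by interpolation, iterated $k$ times (using Fubini in the previous coordinate to re-select a good slice). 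The direction is produced geometrically by the section, not by averaging. Balancing $\rho=(\log R)^{1/2}$ against the Gaussian tail gives $W_1\lesssim R^{-1/2_-}\lesssim|\log\e|^{-1/4_-}$. Notice also that your sketch concludes (modulo the gap) with a rate like $|\log\e|^C\e^{1/2}$, which would be dramatically stronger than the paper's $|\log\e|^{-1/4}$ --- a red flag that the averaging step is doing something illegitimate. The logarithm in the paper's bound is not a technicality of noncompactness; it enters structurally through the lower bound $R\gtrsim|\log\e|^{1/2}$ from the ABP estimate, and your argument has no analogue of it.

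Your later steps (Gaussian truncation, marginals of $1$-log-concave measures, $W_1$ decreases under projection, Brascamp--Lieb) match the paper's ingredients, but the engine that produces the good direction is missing, and replacing it is the whole content of the proof.
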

In the above statement, and in the rest of the note,  we are employing the following notation:
\[
X \lesssim Y^{\beta_-} \qquad \textrm{if \(X\le C(n,\alpha)Y^{\alpha}\) for all \(\alpha< \beta\).}
\]
Analogously,
\[
X \gtrsim Y^{\beta_-} \qquad \textrm{if \(C(n,\alpha )X\ge Y^{\alpha}\) for all \(\alpha<\beta\).}
\]

\begin{remark}
We do not expect the stability estimate in the previous theorem to be sharp. In particular, in dimension $1$ an elementary argument (but completely specific to the one dimensional case) gives a linear control in $\e$. Indeed,
if we set $\psi(x):=x^2/2-\varphi(x)$, then our assumption can be rewritten as 
$$
\int \psi'' \,d\gamma_1 \leq \e.
$$
Since $\psi''=(x-T)'>0$, this gives
$$
\int |(x-T)'|\,d\gamma_1 \leq \e
$$
and using the $L^1$-Poincar\'e inequality for the Gaussian measure we obtain
$$
W_1(\mu,\gamma_1) \leq \int|x-y|\,d\sigma_T(x,y)= \int |x-T(x)|\,d\gamma_1(x) \leq C\e,
$$
where $\sigma_T:=(\Id\times T)_\#\gamma_1$.
\end{remark}
%
%

As explained above, Theorems  \ref{thm:rigidity} and  \ref{thm:stability} can be applied to study the structure of \(1\)-log-concave measures (i.e., measures of the form \(e^{-V}dx\) with \(D^2V\ge \Id_n\)) that almost achieve equality in \eqref{eq:lambda2}. To simplify the presentation and emphasize the main ideas, we limit ourselves to a particular instance of \eqref{eq:lambda}, namely the optimal constant in the $L^2$-Poincar\'e inequality for \(\mu\):
\[
\lambda_\mu :=\inf\Bigg\{\frac{\int |\nabla u|^2 \,d\mu} {\int u^2 \,d\mu}\,:\qquad u\in {\rm Lip}(\R^n)\,, \int u \,d\mu=0\Bigg\}.
\]
It is well-known that \(\lambda_{\gamma_n}=1\) and that $\{u_i(x)=x_i\}_{1\leq i \leq n}$ are the corresponding minimizers. In particular it follows by \eqref{eq:lambda2} that, for every \(1\)-log-concave measure \(\mu\),
\begin{equation}\label{poin}
\int u^2\,d\mu \leq \int |\nabla u|^2\,d\mu\qquad\textrm{for all \(u\in {\rm Lip}(\R^n)\) with $\int u\,d\mu=0$. }
\end{equation}
As a consequence of Theorems  \ref{thm:rigidity} and  \ref{thm:stability} we have:

%
%
%

\begin{theorem}
\label{cor:poincare}
Let \(\mu=e^{-V} dx\) be a probability measure with \(D^2 V\ge \Id_n\),
and assume there exist $k$ functions $\{u_i\}_{1\leq i \leq k}\subset W^{1,2}(\R^n,\mu)$, $k \leq n$, such that
$$
\int u_i\,d\mu =0,\qquad  \int u_i^2\,d\mu =1,\qquad \int \nabla u_i\cdot \nabla u_j\, d\mu =0\qquad \forall\,i \neq j,
$$
and
$$
 \int |\nabla u_i|^2\,d\mu\le(1+\e)
$$
for some $\e >0$.
Then there exists a probability measure \(\nu= \gamma_{p,k}\otimes e^{-W(x')}d x'\), with  \(W:\R^{n-k}\to \R\) satisfying  \(D^2W\ge \Id_{n-k}\), such that
$$
W_1(\mu,\nu)\lesssim \frac{1}{|\log\e|^{1/4_-}}.
$$
In particular, if there exist $n$ orthogonal functions  $\{u_i\}_{1\leq i \leq n}$ that attain the equality in \eqref{poin} then $\mu=\gamma_{n,p}$.
\end{theorem}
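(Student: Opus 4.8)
The plan is to deduce everything from Theorem~\ref{thm:stability} --- and, in the equality case, from Theorem~\ref{thm:rigidity} --- applied to the Brenier map $T=\nabla\varphi$ from $\gamma_n$ to $\mu$, for which $0\le D^2\varphi\le\Id_n$ by Caffarelli's theorem. Concretely, I would show that the hypotheses force
\[
1-C(n)\sqrt\e\ \le\ \int\lambda_{n-k+1}(D^2\varphi)\,d\gamma_n\ \le\ 1 ,
\]
so that \eqref{eq:almost} holds with $\e$ replaced by $C(n)\sqrt\e$. Since $|\log(C(n)\sqrt\e)|\sim\tfrac12|\log\e|$ as $\e\to0$, Theorem~\ref{thm:stability} then gives $W_1(\mu,\nu)\lesssim \frac{1}{|\log\e|^{1/4_-}}$ (we may assume $\e$ small, there being otherwise nothing to prove).

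First I would pull everything back to the Gaussian by setting $v_i:=u_i\circ T$ and $g_i:=(\nabla u_i)\circ T$, so that $\nabla v_i=D^2\varphi\, g_i$ and, since $T_\sharp\gamma_n=\mu$,
\[
\int v_i\,d\gamma_n=0 , \qquad \int v_i^2\,d\gamma_n=1 , \qquad \int g_i\cdot g_j\,d\gamma_n=\int\nabla u_i\cdot\nabla u_j\,d\mu\quad(=0\ \text{if }i\ne j) .
\]
Using that $D^2\varphi$ is symmetric with $0\le(D^2\varphi)^2\le D^2\varphi\le\Id_n$, the Gaussian Poincar\'e inequality applied to $v_i$, and the hypothesis $\int|\nabla u_i|^2\,d\mu\le1+\e$, one obtains
\[
1\ \le\ \int|\nabla v_i|^2\,d\gamma_n=\int\langle(D^2\varphi)^2 g_i,g_i\rangle\,d\gamma_n\ \le\ \int\langle D^2\varphi\,g_i,g_i\rangle\,d\gamma_n\ \le\ \int|g_i|^2\,d\gamma_n=\int|\nabla u_i|^2\,d\mu\ \le\ 1+\e ,
\]
so all these quantities lie in $[1,1+\e]$. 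In particular $\int\langle(\Id_n-D^2\varphi)g_i,g_i\rangle\,d\gamma_n\le\e$, and since $(\Id_n-D^2\varphi)^2\le\Id_n-D^2\varphi$ this gives $\|\nabla v_i-g_i\|_{L^2(\gamma_n)}=\|(\Id_n-D^2\varphi)g_i\|_{L^2(\gamma_n)}\le\sqrt\e$.

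Next I would use the spectral-gap improvement of the Gaussian Poincar\'e inequality coming from the Hermite expansion: for a mean-zero $v$, writing $\Pi_1 v=\tilde a\cdot x$ for its $L^2(\gamma_n)$-projection onto linear functions, $\|v-\Pi_1 v\|_{L^2(\gamma_n)}^2\le\int(|\nabla v|^2-v^2)\,d\gamma_n$ and $\|\nabla v-\tilde a\|_{L^2(\gamma_n)}^2\le 2\int(|\nabla v|^2-v^2)\,d\gamma_n$. Applied to $v_i$, whose Poincar\'e deficit is $\le\e$ by the previous step, this gives $\|v_i-\tilde a_i\cdot x\|_{L^2(\gamma_n)}\le\sqrt\e$ and $\|\nabla v_i-\tilde a_i\|_{L^2(\gamma_n)}\le\sqrt{2\e}$, hence $\|g_i-\tilde a_i\|_{L^2(\gamma_n)}\lesssim\sqrt\e$. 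Here the orthogonality of the gradients enters: since $g_i$ is $\sqrt\e$-close in $L^2(\gamma_n)$ to the constant vector $\tilde a_i$ while $\int g_i\cdot g_j\,d\gamma_n$ equals $0$ for $i\ne j$ and lies in $[1,1+\e]$ for $i=j$, the Gram matrix of $\{\tilde a_1,\dots,\tilde a_k\}$ is $\Id_k+O(\sqrt\e)$; hence, for $\e$ small, a Gram--Schmidt procedure produces an orthonormal system $e_1,\dots,e_k$ in $\R^n$ with $|e_i-\tilde a_i|\lesssim\sqrt\e$, so that also $\|g_i-e_i\|_{L^2(\gamma_n)}\lesssim\sqrt\e$. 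Since $\int\langle D^2\varphi\,g_i,g_i\rangle\,d\gamma_n\ge1$ and $D^2\varphi$ has operator norm $\le1$ pointwise, replacing $g_i$ by $e_i$ costs only $\lesssim\sqrt\e$, whence $\int\langle D^2\varphi\,e_i,e_i\rangle\,d\gamma_n\ge1-C(n)\sqrt\e$. Finally, the Ky Fan maximum principle (for symmetric $M$, $\sum_{i=1}^k\langle Me_i,e_i\rangle$ is at most the sum of the $k$ largest eigenvalues of $M$), together with $D^2\varphi\le\Id_n$ (so that this sum is $\le k-1+\lambda_{n-k+1}(D^2\varphi)$), gives pointwise $\lambda_{n-k+1}(D^2\varphi)\ge\sum_{i=1}^k\langle D^2\varphi\,e_i,e_i\rangle-(k-1)$; integrating against $\gamma_n$ yields the lower bound of the first paragraph, and Theorem~\ref{thm:stability} applies. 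For the final assertion, taking $\e=0$ above (which is what equality in \eqref{poin} for each $u_i$ gives) saturates the chain of inequalities, so $v_i$ is exactly linear, $g_i$ is the constant $\tilde a_i$ with $\{\tilde a_i\}_{i\le n}$ an orthogonal basis of $\R^n$, and $\langle(\Id_n-D^2\varphi)\tilde a_i,\tilde a_i\rangle=0$ a.e.; thus $D^2\varphi=\Id_n$ a.e., and $\mu=\gamma_{p,n}$ (directly, or by Theorem~\ref{thm:rigidity} with $k=n$).

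The step I expect to be the main obstacle is the passage, in the third paragraph, from $L^2(\gamma_n)$-closeness of $v_i$ to a linear function to $L^2(\gamma_n)$-closeness of $\nabla v_i$ --- equivalently of $g_i=(\nabla u_i)\circ T$ --- to a constant vector: this is exactly where the quantitative Gaussian spectral gap (through the Hermite decomposition) is essential, and one must also justify that composing the Sobolev function $u_i$ with the merely a.e.\ differentiable gradient map $T$ obeys the chain rule $\gamma_n$-a.e., which is handled by the same change-of-variables and approximation arguments for Brenier maps already used in the derivation of \eqref{eq:lambda2}. The remaining ingredients --- the Ky Fan maximum principle and the elementary reduction of the logarithm --- are routine.
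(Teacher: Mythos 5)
Your proposal is correct and reaches the same conclusion, but the final step is organized differently from the paper's, and is arguably cleaner. Both arguments share the same opening moves: pull back to $\gamma_n$ via $v_i:=u_i\circ T$, exploit $0\le D^2\varphi\le\Id_n$ to trap $\int|\nabla v_i|^2\,d\gamma_n$, $\int\langle D^2\varphi\,g_i,g_i\rangle\,d\gamma_n$ and $\int|g_i|^2\,d\gamma_n$ in $[1,1+\e]$, and expand $v_i$ in Hermite polynomials to conclude that $v_i$ is $O(\sqrt\e)$-close to a linear function $\tilde a_i\cdot x$ with $\{\tilde a_i\}$ almost orthonormal, hence (after Gram--Schmidt and a rotation) that $g_i$ is $O(\sqrt\e)$-close in $L^2(\gamma_n)$ to $e_i$. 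Where you diverge is the passage to the eigenvalue estimate needed for Theorem~\ref{thm:stability}. The paper introduces the normalized directions $f_i:=w_i/|w_i|$, proves a pointwise implication ($\nabla T\cdot w_i\approx e_i$ and $|\nabla T\cdot f_i|\approx 1$ force $\nabla T\approx\Id$ on $\operatorname{span}(f_i)$, whence $\lambda_{n-k+1}(D^2\psi)\lesssim\delta$), and then combines a layer-cake integral over the good set with a Chebyshev bound on the bad set. You instead invoke the Ky Fan maximum principle to obtain the clean pointwise inequality $\lambda_{n-k+1}(D^2\varphi(x))\ge\sum_{i=1}^k\langle D^2\varphi(x)\,e_i,e_i\rangle-(k-1)$ (valid because the other $k-1$ of the top-$k$ eigenvalues are $\le1$), and integrate it directly against $\gamma_n$ using $\int\langle D^2\varphi\,e_i,e_i\rangle\,d\gamma_n\ge 1-C\sqrt\e$, which you get from $\int\langle D^2\varphi\,g_i,g_i\rangle\,d\gamma_n\ge1$ and $\|g_i-e_i\|_{L^2(\gamma_n)}\lesssim\sqrt\e$ by Cauchy--Schwarz. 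This bypasses the layer-cake/Chebyshev split and the auxiliary vectors $f_i$ entirely; the price is that the Ky Fan principle is a slightly heavier tool than the elementary geometry the paper uses, but the resulting argument is shorter and the quantitative bookkeeping is more transparent. Your rigidity derivation for $\e=0$ is also fine (equality in each step forces $(\Id_n-D^2\varphi)\tilde a_i=0$ a.e.\ for an orthonormal basis $\{\tilde a_i\}$, so $D^2\varphi\equiv\Id_n$), and matches what the paper obtains via Theorem~\ref{thm:rigidity}. The chain-rule concern you flag is real but standard: $T=\nabla\varphi$ is globally Lipschitz by Caffarelli's theorem, so $u_i\circ T$ lies in $W^{1,2}(\gamma_n)$ with $\nabla(u_i\circ T)=D^2\varphi\,(\nabla u_i\circ T)$ a.e.; the paper uses this implicitly as well.
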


We conclude this introduction recalling that the  rigidity version of the above theorem (i.e. the case $\e=0$) has already been proved  by Cheng and Zho in \cite[Theorem 2]{CZ} with   completely different techniques.

\section{Proof of Theorem \ref{thm:rigidity}}

\begin{proof}[Proof of Theorem \ref{thm:rigidity}]
Set \(\psi(x):=|x|^2/2-\varphi(x)\) and note that,
as a consequence of Theorem \ref{thm:caffarelli}, \(\psi:\R^n\to \R\) is a  \(C^{1,1}\)  convex function with \(0 \leq D^2\psi \leq \Id\). 
Also, our assumption implies that 
\begin{equation}\label{zeroeigenvalue}
\lambda_1(D^2\psi(x))=\ldots=\lambda_{k}(D^2 \psi(x))=0\qquad \textrm{for a.e. \(x\in \R^d\).}
\end{equation}
We are going to show that $\psi$ depends only on $n-k$ variables. As we shall show later, this will immediately imply the desired conclusion.
In order to prove the above claim, we note it is enough to prove it for \(k=1\), since then one can argue recursively on $\R^{n-1}$ and so on.

Note that \eqref{zeroeigenvalue} implies that 
\begin{equation}
\label{eq:det0}
\det D^2\psi\equiv 0.
\end{equation}
Up to translate \(\mu\) we can subtract a linear function to \(\psi\) and assume  without loss of generality that \(\psi(x)\ge \psi(0)=0\).

Consider the convex set $\Sigma:=\{\psi=0\}$. We claim that $\Sigma$ contains a line.
Indeed, if not, this set would contain an exposed point $\bar x$.
Up to a rotation, we can assume that $\bar x=a\,e_1$ with $a \geq 0$.
Also, since $\bar x$ is an exposed point, 
$$
\Sigma\subset \{x_1\leq a\}\quad
\text{and}
\quad \Sigma\cap \{x_1=a\}=\{\bar x\}.
$$
Hence, by convexity of $\Sigma$, the set $\Sigma\cap\{x_1\geq -1\}$ is compact.

Consider the affine function
$$
\ell_\eta(x):=\eta(x_1+1),\qquad \eta>0\text{ small},
$$
and define $\Sigma_\eta:=\{\psi \leq \ell_\eta\}$.
Note that, as $\eta \to 0$, the sets $\Sigma_\eta$
converge in the Hausdorff distance to the compact set $\Sigma\cap\{x_1\geq -1\}$. In particular,
this implies that $\Sigma_\eta$ is bounded for $\eta$ sufficiently small.

We now apply Alexandrov estimate  (see for instance \cite[Theorem 2.2.4]{figalli_book}) to the convex function $\psi-\ell_\eta$
inside $\Sigma_\eta$,
and it follows by \eqref{eq:det0} that 
(note that $D^2\ell_\eta\equiv 0$) 
$$
|\varphi(x)-\ell_\eta(x)|^n\leq C_{n}({\rm diam}(S_\eta))^n \int_{\Sigma_\eta}\det D^2\psi=0\qquad \forall\,x \in\Sigma_\eta.
$$
In particular this implies that $\psi(0)=\ell_\eta(0)=\eta$,
a contradiction to the fact that $\psi(0)=0.$

Hence, we proved that $\{\psi=0\}$ contains a line, say $\R e_1$.
Consider now a point \(x\in \R^n\). Then, by convexity of $\psi$, 
\[
\psi(x)+\nabla \psi(x)\cdot (s e_1-x)\le \psi(s e_1)=0\qquad \forall\,s \in \R,
\]
and by letting \(s\to \pm\infty\) we deduce that \(\partial_1\psi(x)=\nabla \psi(x)\cdot e_1=0\). Since $x$ was arbitrary, this means that $\partial_1\psi\equiv 0$,
hence \(\psi(x)=\psi(0,x')\), $x'\in \R^{n-1}$.

Going back to $\varphi$, this proves that
$$
T(x)=(x_1,x'-\nabla \psi(x')),
$$
and because $\mu=T_\#\gamma_n$ we immediately deduce that $\mu=\gamma_1\otimes \mu_1$ where $\mu_1:=(\Id_{n-1}-\nabla \psi)_\#\gamma_{n-1}$.

Finally, to deduce that $\mu_1=e^{-W}dx'$ with $D^2W\geq \Id_{n-1}$
we observe that $\mu_1=(\pi')_\#\mu$ where $\pi':\R^n\to\R^{n-1}$ is the projection given by $\pi'(x_1,x'):=x'$.
Hence, the result is a consequence of the fact that $1$-log-concavity is preserved when taking marginals, see \cite[Theorem 4.3]{BL} or \cite[Theorem 3.8]{SW}.
\end{proof}

\section{Proof of Theorem \ref{thm:stability}}

To prove Theorem \ref{thm:stability},
we first recall a basic properties of convex sets
(see for instance \cite[Lemma 2]{Cafbdry} for a proof).

\begin{lemma}
\label{lem:john}
Given $S$ an open bounded convex set in $\R^n$
with barycenter at $0$,
let $\mathcal E$ denote an ellipsoid of minimal volume 
with center $0$ and containing $K$.
Then there exists a dimensional constant $\kappa_n>0$
such that $\kappa_n \mathcal E\subset S$.
\end{lemma}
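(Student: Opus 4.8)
The plan is to reduce to the case $\mathcal E = B_1$ (the unit ball) by an affine normalization, and then to combine two ingredients: the contact structure forced by the minimality of $\mathcal E$, in the form of a John-type decomposition of the identity, together with the elementary fact that a convex body with barycenter at the origin cannot be too lopsided in any direction.

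First I would normalize. Writing $\mathcal E = L^{-1}(B_1)$ for an invertible linear map $L$ and replacing $S$ by $L(S)$, one may assume $\mathcal E = B_1$: indeed $L$ multiplies every volume by $|\det L|$ and carries centered ellipsoids to centered ellipsoids, so $B_1$ remains the minimal-volume centered ellipsoid containing $L(S)$, and $L$ carries the barycenter of $S$ to that of $L(S)$. It then suffices to produce a dimensional constant $\kappa_n > 0$ with $B_{\kappa_n} \subseteq S$.

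Next I would use the minimality of $B_1$ through the standard first-order argument of John's theorem. If $\Id$ did not lie in the convex cone generated by $\{u \otimes u : u \in \overline S \cap \partial B_1\}$, a separating symmetric matrix $H$ could be adjusted so that, for small $t > 0$, the ellipsoid $\{\langle (\Id + tH)x, x\rangle \le 1\}$ still contains $\overline S$ but has strictly smaller volume, contradicting minimality; the only delicate point is verifying that the perturbed ellipsoid stays circumscribed, which follows from a compactness argument near the contact set $\overline S \cap \partial B_1$ (this is exactly \cite[Lemma 2]{Cafbdry}). This produces contact points $u_1, \dots, u_N \in \overline S \cap \partial B_1$ and weights $c_i > 0$ with $\sum_i c_i\, u_i \otimes u_i = \Id$; taking traces gives $\sum_i c_i = n$, so for every unit vector $\theta$ one has $\sum_i c_i \langle u_i, \theta\rangle^2 = 1$, whence some contact point $u_{i_0}$ satisfies $|\langle u_{i_0}, \theta\rangle| \ge n^{-1/2}$.

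Finally I would bring in the barycenter hypothesis to obtain a ball concentric with $\mathcal E$. Fix a unit vector $\theta$ and pick $u_{i_0}$ as above. If $\langle u_{i_0}, \theta\rangle \ge n^{-1/2}$ then $\max_{x \in \overline S}\langle x, \theta\rangle \ge n^{-1/2}$. If instead $\langle u_{i_0}, \theta\rangle \le -n^{-1/2}$, then $\min_{x \in \overline S}\langle x, \theta\rangle \le -n^{-1/2}$, and here the barycenter condition enters: the one-dimensional marginal of $S$ in the direction $\theta$ is $\tfrac{1}{n-1}$-concave by Brunn--Minkowski and has barycenter $0$, and for such a function the extent on one side of the origin is at most $n$ times the extent on the other (the cone being the extremal case), so $\max_{x \in \overline S}\langle x, \theta\rangle \ge n^{-3/2}$. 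In either case the support function of $\overline S$ is at least $n^{-3/2}$ in every direction, i.e.\ $\overline B_{n^{-3/2}} \subseteq \overline S$; since the barycenter $0$ is an interior point of $S$, this yields $B_{\kappa_n} \subseteq S$ with $\kappa_n := n^{-3/2}$, and undoing the normalization gives $\kappa_n \mathcal E \subseteq S$. I expect the main obstacle to be the first-order optimality step: this is the only place the minimality of $\mathcal E$ enters and the only part requiring genuine work, whereas the barycenter input is precisely what upgrades the generic (possibly off-center) John conclusion to a ball concentric with $\mathcal E$.
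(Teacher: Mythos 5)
The paper does not prove this lemma itself but refers the reader to \cite[Lemma 2]{Cafbdry}, so the useful question is whether your self-contained argument is correct, and it is. The affine normalization to $\mathcal E = B_1$ is legitimate (linear maps multiply volumes by a constant, preserve centered ellipsoids, and carry barycenters to barycenters), and the contact-point decomposition $\Id = \sum_i c_i\, u_i\otimes u_i$ is the correct first-order condition for the \emph{centered} minimal ellipsoid -- note that, unlike the unconstrained John ellipsoid, one does \emph{not} also get $\sum_i c_i u_i = 0$, and you rightly don't use it. Tracing gives $\sum_i c_i = n$, and for each unit $\theta$ the identity $\sum_i c_i\langle u_i,\theta\rangle^2 = 1$ then forces a contact point with $|\langle u_{i_0},\theta\rangle|\ge n^{-1/2}$; this is a one-sided lower bound on the support function, which is exactly what the minimality buys you. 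The barycenter hypothesis upgrades it to a two-sided bound via the classical Brunn--Minkowski fact that a convex body with centroid at the origin satisfies $-S\subset n S$ (equivalently, the $\tfrac{1}{n-1}$-concave marginal with zero mean has one-sided extent at most $n$ times the other, the cone being extremal), giving $h_{\overline S}\ge n^{-3/2}$ in every direction and hence $B_{n^{-3/2}}\subset S$. One wrinkle to fix: you cite \cite[Lemma 2]{Cafbdry} for the compactness step in the first-order argument, but that reference is precisely the lemma you are proving, so the citation is circular in spirit; carry out the Farkas--Carath\'eodory step explicitly or cite a standard source for the centered John theorem instead. Beyond that, your route (John decomposition plus the centroid inequality $-S\subset nS$) is a clean, structured proof; Caffarelli's own argument is, as I recall, a more hands-on ``compress one axis of $B_1$ and beat the volume'' contradiction, but both lead to the same conclusion with an explicit dimensional constant.
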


Thanks to this result, we can prove the following simple geometric lemma:

\begin{lemma}\label{lm:geo}
Let $\kappa_n$ be as in Lemma \ref{lem:john},
set $c_n:=\kappa_n/2$,
and consider \(S\subset \R^n\) an open convex set with barycenter at \(0\).
Assume that \(S\subset B_R\) and  \(\partial S\cap \partial B_R\neq \emptyset\). Then there exists a unit vector $v \in \mathbb S^{n-1}$ such that \(\pm c_n R v\in S\).
 \end{lemma}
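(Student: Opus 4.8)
The plan is to feed $S$ into the John-type Lemma \ref{lem:john} and then observe that the minimal enclosing ellipsoid of $S$ is forced to be ``long'' in at least one direction, precisely because $S$ reaches all the way to $\partial B_R$. First I would record the (routine) fact that $0\in S$: since $S$ is open, bounded and convex, its barycenter lies in its interior, so the normalization that the barycenter is $0$ gives $0\in S$. Then I would apply Lemma \ref{lem:john} to get an ellipsoid $\mathcal E$ of minimal volume, centered at $0$, with $S\subset\mathcal E$ and $\kappa_n\mathcal E\subset S$. Choosing orthonormal coordinates in which $\mathcal E=\{y:\sum_i(y_i/a_i)^2\le 1\}$, let $a:=\max_i a_i$ and let $v\in\mathbb S^{n-1}$ be a corresponding semi-axis direction, so that $\pm a\,v\in\mathcal E$.

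The heart of the matter is the inequality $a\ge R$, and I expect this to be the only real content of the lemma (the rest being bookkeeping). Since $\partial S\cap\partial B_R\neq\emptyset$, there is $x_0\in\overline S$ with $|x_0|=R$; as $\mathcal E$ is a closed set containing $S$, it contains $\overline S$, hence $x_0\in\mathcal E$. Therefore
\[
R^2=|x_0|^2=\sum_i x_{0,i}^2\le a^2\sum_i\Big(\frac{x_{0,i}}{a_i}\Big)^2\le a^2,
\]
so $a\ge R$.

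To conclude, note that $\pm\kappa_n a\,v\in\kappa_n\mathcal E\subset S$. Since $0\in S$ and $S$ is convex, the whole segments $[0,\pm\kappa_n a\,v]$ lie in $S$. Because
\[
c_nR=\frac{\kappa_n}{2}\,R\le\frac{\kappa_n}{2}\,a\le\kappa_n a,
\]
the points $\pm c_nR\,v$ belong to these segments, hence $\pm c_nR\,v\in S$, which is the desired claim. The only point that requires minimal care is making sure all the containments are between the correct (open vs.\ closed) sets — $\mathcal E$ and $\kappa_n\mathcal E$ are taken closed, $S$ is open, and $\kappa_n\mathcal E\subset S$ together with convexity handles everything — but there is no genuine difficulty here.
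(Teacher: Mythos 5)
Your proof is correct and relies on the same key ingredient (the John-type ellipsoid of Lemma \ref{lem:john}) as the paper. The paper is slightly more direct: after scaling to $R=1$, it takes $v$ to be the touching point in $\partial S\cap\partial B_1$ itself, observes $v\in\overline{\mathcal E}$ from $S\subset\mathcal E$, uses the central symmetry of $\mathcal E$ to get $-v\in\overline{\mathcal E}$ as well, and then concludes $\pm c_n v\in c_n\overline{\mathcal E}\subset \kappa_n\mathcal E\subset S$ — thereby avoiding the detour through the longest semi-axis and the associated open/closed bookkeeping at the boundary of $\kappa_n\mathcal E$.
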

\begin{proof}
By scaling we can assume that \(R=1\).

Let $v \in \partial S\cap \partial B_1$, and consider the ellipsoid $\mathcal E$ provided by Lemma \ref{lem:john}.
Since $v \in \overline{\mathcal E}$ and $\mathcal E$ 
is symmetric with respect to the origin, also $-v \in \overline{\mathcal E}$.
Hence
$$
\pm c_n v \in c_n\overline{\mathcal E}\subset \kappa_n\mathcal E
\subset S,
$$
as desired.
\end{proof}

\begin{proof}[Proof of Theorem \ref{thm:stability}]
As in the proof of Theorem \ref{thm:rigidity} we set  \(\psi:=|x|^2/2-\varphi\). Then, inequality  \eqref{eq:almost} gives
\begin{equation}\label{eq:det}
\int \lambda_{k}(D^2\psi)\,d\gamma_n \leq \e.
\end{equation}
Up to subtract a linear function (i.e.  substituting \(\mu\) with one of its translation, which does not affect the conlclusion of the theorem)  we can assume that \(\psi(x)\ge \psi (0)=0\), therefore \(\nabla \psi(0)=\nabla \varphi(0)=0\). Since $(\nabla \varphi)_\#\gamma_n=\mu$ and \(\|D^2\varphi\|_\infty\le 1\), these conditions imply that 
\[
\int |x|\,d\mu(x)=\int|\nabla\varphi(x)|\,d\gamma_n(x)=\int |\nabla \varphi(x)-\nabla \varphi(0)| \,d\gamma_n(x) \le \int |x|\, d\gamma_n(x) \le C_n.
\]
In particular
\[
W_1(\mu,\gamma)\le W_1(\mu,\delta_0)+W_1(\delta_0,\gamma)\le C_n.
\]
This proves that \eqref{eq:log} holds true with \(\nu=\gamma_n\) and with a constant \(C\approx |\log \e_0|^{1/4}\) whenever \(\e\ge \e_0\). Hence, when showing the validity of \eqref{eq:log}, we can safely assume that \(\e\le \e_0(n)\ll1\).
Furthermore, we can assume that the graph of \(\psi\) does not contain lines (otherwise, by the proof of Theorem \ref{thm:rigidity}, we would deduce that $\mu$ splits a Gaussian factor, and we could simply repeat the argument in $\R^{n-1}$).

Thanks to these considerations, we can apply  \cite[Lemma 1]{Cafbdry}  to find a slope \(p\in \R^n\) such that the open convex set
\[
S_1:=\{x\in \R^n: \psi (x)< p\cdot x+ 1\}
\]
 is  nonempty, bounded,  and with barycenter at $0$. Applying the Aleksandrov estimate in \cite[Theorem 2.2.4]{figalli_book} to the convex function \(\tilde\psi(x):=\psi(x)-p\cdot x-1\) inside the set \(S_1\), we get (note that $D^2\tilde\psi=D^2\psi$)
 \begin{equation}\label{eq:abp}
 1\le \Bigl(-\min_{S_1} \tilde\psi \Bigr)^n\le C_n(\diam(S_1))^n\int_{S_1} \det D^2 \psi.
 \end{equation}
Consider now the smallest radius \(R>0\) such that \(S_1\subset B_R\) (note that  \(R<+\infty\) since \(S_1\) is bounded). 
Since $\gamma_n \geq c_ne^{-R^2/2}$ in $B_R$
and $\lambda_i(D^2\psi)\leq 1$ for all $i=1,\ldots,n$,
\eqref{eq:det} implies that
$$
\int_{B_R}\det D^2\psi \leq C_n e^{R^2/2}\e.
$$
Hence, using \eqref{eq:abp}, since ${\rm diam}(S_1)\leq 2R$ we get
$$
1 \leq C_n R^n e^{R^2/2}\e
$$ 
which yields
\begin{equation}\label{eq:R}
R \gtrsim |\log  \e|^{1/2_+}.
\end{equation}
Now, up to a rotation and by Lemma \ref{lm:geo}, we can assume that 
\[
\pm c_nRe_1 \in S_1.
\]
Consider $1\ll \rho \ll R^{1/2}$ to be chosen. Since \(S_1\subset B_R\) and \(\psi \ge 0\) we get that \(|p|\le 1/R\), therefore  \(\psi\le 2\) on \(S_1\subset B_R\). Hence
$$
2 \geq \psi(z)\geq \psi(x)+\langle \n \psi(x), z-x\rangle \geq \langle \n \psi(x), z-x\rangle \qquad \forall\,z \in S_1,\,x \in B_{\rho}.
$$
Thus, since $|\n \psi|\leq \rho$ in $B_{\rho}$ (by $\|D^2\psi\|_{L^\infty(\R^n)}\le 1$ and \(|\nabla \psi(0)|=0\)),
choosing $z =\pm c_nRe_1$ we get
\begin{equation}\label{eq:der}
|\partial_1\psi|\leq \frac{C_n\rho^2}{R} \qquad \text{inside }B_{\rho}.
\end{equation}
Consider now $\bar x_1\in [-1,1]$ (to be fixed later) and define $\psi_1(x'):=\psi(\bar x_1,x')$ with $x' \in \R^{n-1}$. Integrating \eqref{eq:der} with respect to $x_1$ inside $B_{\rho/2}$, we get 
$$
|\psi -\psi_1| \leq C_n \frac{\rho^3}{R} \qquad \text{inside }B_{\rho/2}.
$$
Thus, using the interpolation inequality
$$
\|\n \psi -\n \psi_1\|_{L^\infty(B_{\rho/4})}^2 \leq  C_n\| \psi -\psi_1\|_{L^\infty(B_{\rho/2})} \|D^2 \psi -D^2 \psi_1\|_{L^\infty(B_{\rho/2})}
$$
and recalling that $\|D^2\psi\|_{L^\infty(\R^n)} \leq 1$ (hence $\|D^2\psi_1\|_{L^\infty(\R^{n-1})}\leq 1$),
we get
\begin{equation*}
|\n \psi -\n \psi_1| \leq C_n \frac{\rho^{3/2}}{R^{1/2}} \qquad \text{inside }B_{\rho/4}.
\end{equation*}
If $k=1$ we stop here, otherwise we notice that \eqref{eq:det} implies that 
$$
\int_\R d\gamma_1(x_1)\int_{\R^{n-1}} 
\det D_{x'x'}^2\psi(x_1,x')\,d\gamma_{n-1}(x') \leq
\int_\R d\gamma_1(x_1)\int_{\R^{n-1}} 
\lambda_{2}(D^2\psi)(x_1,x')\,d\gamma_{n-1}(x') \leq \e,
$$
where we used  that\footnote{This inequality follows from the general fact that, given $A \in \R^{n\times n}$ symmetric matrix and \(W\subset \R^n\) a \(k\)-dimensional vector space,
\[
\lambda_1\big(A\big|_W\big)=\min_{v\in W} \frac{A v\cdot v}{|v|^2}\le \max_{\substack{v\in W'\subset \R^n\\  \textrm{\(W'\) \(k\)-dim}}} \min_{W'} \frac{A v\cdot v}{|v|^2}=\lambda_{n-k+1}(A).
\]}
$$
\lambda_1\bigl(D^2\psi|_{\{0\}\times\R^{n-1}}\bigr) \leq \lambda_2(D^2\psi)
$$
and that (since $D^2\psi\leq \Id$)
$$
\det D_{x'x'}^2\psi(x_1,x')\leq \lambda_1\bigl(D^2\psi|_{\{0\}\times\R^{n-1}}\bigr).
$$
Hence, by Fubini's Theorem, there exists $\bar x_1\in [-1,1]$ such that $\psi_1(x')=\psi(\bar x_1,x')$ satisfies
$$
\int_{\R^{n-1}} \det D^2\psi_1\,d\gamma_{n-1}(x) \leq C_n\e.
$$
This allows us to repeat the argument above in $\R^{n-1}$ with 
$$
\widetilde \psi_1(x'):=\psi_1(x')-\nabla_{x'} \psi_1(0)\cdot x'-\psi_1(0)
$$ 
in place of $\psi$, and up to a rotation we deduce that
$$
|\n \widetilde{\psi}_1 -\n \psi_2| \leq C_n \frac{\rho^{3/2}}{R^{1/2}} \qquad \text{inside }B_{\rho/4}.
$$
where $\psi_2(x''):=\psi_1(\bar x_2,x'')$, where $\bar x_2 \in [-1,1]$ is arbitrary.
By triangle inequality, this yields 
$$
|\n \psi +p'- \n \psi_2| \leq C_n \frac{\rho^{3/2}}{R^{1/2}} \qquad \text{inside }B_{\rho/4},
$$
where  \(p'=-(0,\nabla_{x'} \psi(\bar x_1,0)\).
Note that, since \(|\bar x_1|\le 1\), \(\nabla \psi(0)=0\), and \(\|D^2\psi\|_\infty\le 1\), we have \(|p|\le 1\). Iterating this argument $k$ times, we conclude that
$$
|\n \psi +\bar p-\n \psi_k| \leq C_n \frac{\rho^{3/2}}{R^{1/2}} \qquad \text{inside }B_{\rho/4}
$$
where \(\bar p=(p,p'')\in \R^k\times \R^{n-k}=\R^n\) with \(|\bar p|\le C_n\),
$$
\psi_k(y):=\psi(\bar x_1,\ldots,\bar x_k,y),\qquad y \in \R^{n-k},
$$
and $\bar x_i \in [-1,1]$.
Recalling that $\n \varphi=x-\nabla \psi$, we have proved that
$$
T(x)=\n \varphi(x)=(x_1+p_1,\ldots,x_k+p_k,S(y)+p'') +Q(x),
$$
where $Q:=-(\nabla \psi-\nabla\psi_k+\bar p)$ satisfies
\[
\|Q\|_{L^\infty(B_{\rho})}\leq C_n\frac{\rho^{3/2}}{R^{1/2}}\qquad\textrm{and}\qquad |Q(x)|\le C_n (1+|x|)
\]
(in the second bound we used that $T(0)=\nabla \varphi(0)=0$, \(|p|\le C_n\), and 
$T$ is $1$-Lipschitz).
Hence, if we set $\nu:=(S+p'')_\# \gamma_{n-k}$, we have 
$$
W_1(\mu,\gamma_{p,k}\otimes \nu) \leq \int |Q| \,d\gamma_n
\leq C_n\frac{\rho^{3/2}}{R^{1/2}}+C_n\int_{\R^n\setminus B_{\rho}} |x| \,d\gamma_n 
= C_n\frac{\rho^{3/2}}{R^{1/2}}+C_n\rho^{n}e^{-\rho^2/2},
$$
so, by choosing $\rho:=(\log R)^{1/2}$, we get
$$
W_1(\mu,\gamma_{p,k}\otimes \nu) \lesssim\frac{1}{R^{1/2_-}}.
$$
Consider now $\pi_k:\R^n\to\R^n$ and $\bar \pi_{n-k}:\R^n\to \R^{n-k}$
the orthogonal projection onto the first $k$ and the last $n-k$ coordinates, respectively.
Define $\mu_1:=(\pi_k)_\# (e^{-V}dx)$, $\mu_2:=(\bar \pi_{n-k})_\# (e^{-V}dx)$, and note that these are \(1\)-log-concave measures in \(\R^k\) and \(\R^{n-k}\) respectively
(see  \cite[Theorem 4.3]{BL} or \cite[Theorem 3.8]{SW}). In particular \(\mu_2=e^{-W}\) with \(D^2W\ge \Id_{n-k}\). Moreover, since $W_1$ decreases under orthogonal projection,
$$
W_1(\mu_2,\nu)=W_1\bigl((\bar \pi_{n-k})_\#\mu, (\bar \pi_{n-k})_\#(\gamma_{p,k}\otimes\nu) \bigr) \leq W_1(\mu,\gamma_{p,k}\otimes \nu) \lesssim\frac{1}{R^{1/2_-}},
$$
thus
\[
\begin{split}
W_1(\mu,\gamma_{p,k}\otimes \mu_2)
&\leq W_1(\mu,\gamma_{p,k}\otimes \nu)+W_1(\gamma_{p,k}\otimes \nu,\gamma_{p,k}\otimes \mu_2)
\\
&\le  W_1(\mu,\gamma_{p,k}\otimes \nu)+W_1( \nu, \mu_2)  \lesssim\frac{1}{R^{1/2_-}}
\end{split}
\]
where we used the elementary fact that \(W_1(\gamma_{p,k}\otimes \nu,\gamma_{p,k}\otimes \mu_2)\le W_1( \nu, \mu_2) \). Recalling \eqref{eq:R}, this proves that
$$
W_1(\mu,\gamma_{p,k}\otimes \mu_2) \lesssim\frac{1} {|\log \e|^{1/4_-}},
$$
concluding the proof.
\end{proof}

\section{Proof of Theorem \ref{cor:poincare}}
\begin{proof}[Proof of Theorem \ref{cor:poincare}]
As in the proof of Theorem \ref{thm:stability}, it is enough to prove the result when $\e \leq \e_0 \ll 1$.

Let $\{u_i\}_{1\leq i \leq k}$ be as in the statement,
and set $v_i:=u_i\circ T$, where $T=\nabla \varphi:\R^n\to \R^n$ is the Brenier map
from $\gamma_n$ to $\mu$.
Note that since $T_\#\gamma_n=\mu$,
$$
\int v_i\,d\gamma_n=\int u_i\circ T\,d\gamma_n=\int u_i\,d\mu=0.
$$
Also, since $|\nabla T|\leq 1$ and by our assumption on $u_i$,
\begin{align*}
\int |\nabla v_i|^2\,d\gamma_n&\leq \int |\nabla u_i|^2\circ T\,d\gamma_n
=\int |\nabla u_i|^2\,d\mu\\
&\le  (1+\e) 
\int u^2_i\,d\mu =(1+\e) \int v^2_i\,d\gamma_n
\le (1+\e) \int |\nabla v_i|^2\,d\gamma_n,
\end{align*}
where the last inequality follows from the Poincar\'e inequality for $\gamma_n$
applied to $v_i$. Since  
$$
\int |\nabla u_i|^2\,d\mu\le (1+\e),
$$
 this proves that
\begin{equation}
\label{eq:eps 1}
0 \leq \int \Bigl( |\n u_i|^2\circ T - |\n v_i|^2\Bigr)\,d\gamma_n \leq \e \int |\nabla v_i|^2\,d\mu \leq \e (1+\e).
\end{equation}
Moreover, by Theorem \ref{thm:caffarelli},  $\nabla T=D^2\varphi $ is a symmetric matrix satisfying $0\leq \n T\leq \Id_n$,
therefore  \((\Id-\nabla T)^2\le \Id -(\n T)^2\). Hence,
since $\n v_i= \nabla T\cdot\n u_i\circ T $, it follows by \eqref{eq:eps 1} that 
\begin{equation}\label{eq:v1}
\begin{aligned}
\int | \n u_i\circ T-\n v_i|^2\,d\gamma_n
&=\int |(\Id_n - \nabla T)\cdot \n u_i\circ T|^2\,d\gamma_n\\
&=\int (\Id_n - (\n T))^2[\n u_i\circ T,\nabla u_i\circ T]\,d\gamma_n\\
&\le \int (\Id_n - (\n T)^2)[\n u_i\circ T,\nabla u_i\circ T]\,d\gamma_n\\
&=\int \Bigl( |\n u_i|^2\circ T - |\n v_i|^2\Bigr)\,d\gamma_n\leq 2\e,
\end{aligned}
\end{equation}
where, given a matrix \(A\) and a vector \(v\), we have used the notation \(A[v,v]\) for \(Av\cdot v\).
In particular, recalling the orthogonality constraint $\int \n u_i\cdot \n u_j\,d\mu =0$, we deduce that
\begin{equation}
\label{eq:ortho eps}
\int \n v_i\cdot \n v_j\,d\gamma_n=O(\sqrt{\e}).
\end{equation}
In addition, if we set 
$$
f_i(x):=\frac{\n u_i\circ T(x)}{|\n u_i\circ T(x)|}
$$
then, using again that \(|\nabla T|\le 1\),
\begin{equation}
\label{eq:2eps}
\int |\n (u_i\circ T)|^2\Bigl(1 - |\n T\cdot f_i|^2\Bigr)\,d\gamma
\leq \int |\n u_i|^2\circ T\Bigl(1 - |\n T\cdot f_i|^2\Bigr)\,d\gamma_n \leq 2\e.
\end{equation}
Now, for \(j\in \N\), let \(H_j:\R\to \R\) be the one dimensional  Hermite polynomial of degree \(j\) (see \cite[Section 9.2]{DaPrato} for a precise definition). It is well known (see for instance \cite{DaPrato}) that for \(J=(j_1,\dots,j_n)\in \N^n\) the functions
\[
H_J(x_1,\dots,x_n)=H_{j_1}(x_1)H_{j_2}(x_2)\cdot\dots\cdot H_{j_n}(x_n)
\]
form a Hilbert basis of \(L^2(\R^n,\gamma_n)\). Hence,
since \(\alpha^i_0=\int v_i\,d\gamma_n=0\), we can write 
\[
v_i=\sum_{J\in \N^n\setminus \{0\}} \alpha^i_J H_J.
\]
By some elementary properties of Hermite polynomials (see \cite[Proposition 9.3]{DaPrato}), we get 
\[
1=\int v_i^2 d\gamma_n=\sum_{J\in \N^n\setminus \{0\}}\big(\alpha^i_J\big)^2, \qquad \int |\nabla v_i|^2d\gamma _n=\sum_{J\in \N^n\setminus \{0\}} |J| \big(\alpha^i_J\big)^2.
\]
Hence, combining the above equations with the bound \(\int |\nabla v_i|^2d\gamma _n\le (1+\e)\), we obtain
\[
\e\ge \int |\nabla v_i|^2d\gamma _n-\int  v_i^2d\gamma _n=\sum_{J\in \N^n\,, |J|\ge 2}(|J|-1)\big(\alpha^i_J\big)^2\ge \frac{1}{2}\sum_{J\in \N^n\,, |J|\ge 2}|J|\big(\alpha^i_J\big)^2,
\]
where $|J|=\sum_{m=1}^nj_m$.
Recalling that the first Hermite polynomials are just linear functions (since \(H_1(t)=t\)), using the notation
$$
\alpha_j^i:=\alpha_J^i \qquad \text{with }J=e_j \in \mathbb N^n
$$
we deduce that
\[
v_i(x)=\sum_{j=1}^n \alpha^{i}_j x_j+z(x),
\qquad
\textrm{with} 
\qquad
\|z\|^2_{W^{1,2}(\R^n,\gamma_n)}=O(\e).
\]
In particular, if we define the vector
\[
V_i:=\sum_{j=1}^n \alpha^i_{j}e_j \in \R^n,
\]
and we recall that  $\int |\nabla v_i|^2\,d\gamma_n=1+O(\e)$  and the almost orthogonality relation \eqref{eq:ortho eps}, we infer that $|V_i|=1+O(\e)$ and $|V_i\cdot V_l|=O(\sqrt{\e})$ for all $i \neq l\in\{1,\dots,k\}$.

Hence, up to a rotation, we can assume that $|V_i-e_i|=O(\sqrt{\e})$ for all $i=1,\ldots,k$, and \eqref{eq:v1} yields
\begin{equation}
\label{eq:close 1}
\int |\n (u_i \circ T) -e_i|^2\,d\gamma_n \leq C\,\e.
\end{equation}
Since  $0 \leq 1 - |\n T\cdot f_i|^2\leq 1$,
it follows by \eqref{eq:2eps} and \eqref{eq:close 1} that
\begin{equation}
\label{eq:close 2}
\int \Bigl(1 - |\n T\cdot f_i|^2\Bigr)\,d\gamma_n \leq
2\int \Bigl(|\n (u_i\circ T)|^2+|\n (u_i \circ T) -e_i|^2\Bigr)\Bigl(1 - |\n T\cdot f_i|^2\Bigr)\,d\gamma_n
\leq C \e.
\end{equation}
Set $w_i:=\n u_i \circ T$ so that $f_i=\frac{w_i}{|w_i|}$.
We note that, since all the eigenvalues of \(\nabla T=D^2\varphi\) are bounded by \(1\), given $\delta \ll 1$ the following holds: whenever
$$
|\n T\cdot w_i -e_i|\leq \delta
\qquad\text{and}\qquad
|\n T\cdot f_i|\geq 1-\delta
$$
then $|w_i|=1+O(\delta)$. In particular,
$$
|\n T\cdot f_i - e_i|\leq C\delta.
$$
Hence, if $\delta \leq \delta_0$ where \(\delta_0\) is a small geometric constant, this implies that
the vectors $f_i$ are a basis of $\R^k$, and 
$$
\nabla T|_{{\rm span}(f_1,\ldots,f_k)} \geq (1-C\delta)\,\Id.
$$
Defining $\psi(x):=|x|^2/2-\varphi(x)$,
this proves that
\begin{equation}\label{inclusione}
\biggl\{x\,:\,\sum_i |\n T(x)\cdot w_i(x) -e_i| + \Bigl(1-|\n T(x)\cdot f_i(x)|\Bigr)\leq \delta\biggr\}
\subset \left\{x\,:\,\lambda_{n-k+1}(D^2\psi(x)) \leq C\delta\right\}
\end{equation}
for all $0<\delta\leq \delta_0$.
By the layer-cake formula, \eqref{eq:close 1}, and \eqref{eq:close 2},
this implies that
\begin{equation*}
\begin{split}
\int_{\{\lambda_{n-k+1}(D^2\psi) \leq C\delta_0\}}\lambda_{n-k+1}(D^2\psi)\,d\gamma_n
&= C\int_0^{\delta_0} \gamma_n\bigl(\{\lambda_{n-k+1}(D^2\psi) > Cs\}\bigr)\,ds
\\
&\leq C\sum_i \int_0^{\delta_0} \gamma_n\bigl(\{|\n T\cdot w_i - e_i|>s\}\bigr)\,ds\\
&\quad+C\sum_i \int_0^{\delta_0} \gamma_n\bigl(\{1-|\n T\cdot f_i|>s\}\bigr)\,ds
\\
&\leq C \sum_i \int \Bigl(|\n T\cdot w_i - e_i|+\bigl(1-|\n T\cdot f_i|\bigr)\Bigr)\,d\gamma_n\leq C\sqrt{\e}.
\end{split}
\end{equation*}
On the other hand, again by \eqref{inclusione},  \eqref{eq:close 1}, \eqref{eq:close 2},
and Chebishev's inequality, 
\begin{multline*}
\gamma_n\bigl(\{\lambda_{n-k+1}(D^2\psi) > C\delta_0\}\bigr)
\leq \sum_i \gamma_n\bigl(\{|\n T\cdot w_i - e_i|>\delta_0\}\bigr)\\
\quad+\sum_i\gamma_n\bigl(\{1-|\n T\cdot f_i|>\delta_0\}\bigr)\leq C\,\frac{\e}{\delta_0^2}.
\end{multline*}
Hence, since $\delta_0$ is a small but fixed geometric constant, combining the two equations above and recalling that \(\lambda_{n-k+1}(D^2\psi)\le 1\), we obtain
$$
\int \lambda_{n-k+1}(D^2\psi)\,d\gamma_n \leq C\sqrt{\e}.
$$
This implies that \eqref{eq:almost} holds with $C\sqrt{\e}$ in place of $\e$, and the result follows by Theorem \ref{thm:stability}.
\end{proof}

\subsection*{Acknowledgements}
G.D.P. is supported by the MIUR SIR-grant ``Geometric Variational Problems'' (RBSI14RVEZ). G.D.P is a  member of the ``Gruppo Nazionale per l'Analisi Matematica, la Probabilit\`a e le loro Applicazioni'' (GNAMPA) of the Istituto Nazionale di Alta Matematica (INdAM). A.F. is supported by NSF Grants DMS-1262411 and 
DMS-1361122.

\end{document}